\documentclass[11pt,letterpaper]{amsart}
\usepackage{amsmath,amssymb, amsthm}
\usepackage{appendix}
\usepackage{amsmath}
\usepackage{amssymb}
\usepackage{amscd}
\usepackage{latexsym}
\usepackage[dvipdfmx]{graphicx}
\usepackage{graphicx}
\usepackage{here}
\usepackage{xcolor}
\usepackage{palatino}
\usepackage{tikz}

\usepackage[colorlinks]{hyperref}
\hypersetup{citecolor=blue}

\newtheorem{theorem}{Theorem}[section]
\newtheorem{lemma}[theorem]{Lemma}

\newtheorem{cor}[theorem]{Corollary}

\newtheorem{question}[theorem]{Question}

\theoremstyle{definition}
\newtheorem{definition}[theorem]{Definition}

\theoremstyle{remark}
\newtheorem{remark}[theorem]{Remark}

\numberwithin{equation}{section}

\newcommand{\R}{{\mathbb R}}

\newcommand{\C}{{\mathbb C}}
\newcommand{\Z}{{\mathbb Z}}
\newcommand{\Q}{{\mathbb Q}}

\newcommand{\rank}{\mathrm{rank}}
\newcommand{\corank}{\mathrm{corank}}
\newcommand{\im}{\mathrm{im}}
\newcommand{\coker}{\mathrm{coker}}
\title[Infinitely many symplectic fillings]
{
{\large Every cusp singularity link admits infinitely many strong symplectic fillings}
}
\author[N Kasuya]{Naohiko KASUYA}
\address{Department of Mathematics, Faculty of Science, Hokkaido University, 
North 10, West 8, Kita-ku, Sapporo, Hokkaido 060-0810, Japan.}
\email{nkasuya@math.sci.hokudai.ac.jp}
\author[T Oba]{Takahiro OBA}
\address{Department of Mathematics, The University of Osaka, 1-1 Machikaneyama, Toyonaka, Osaka 560-0043, Japan}
\email{taka.oba@math.sci.osaka-u.ac.jp}
\subjclass[2020]{Primary~ 57R17, 32S25
}
\keywords{Cusp singularities, exceptional unimodal singularities, strong symplectic fillings}

\begin{document}

\maketitle

\begin{abstract}
In this paper, we show that if the link of an isolated complex surface singularity is either a $Sol^3$-manifold or an $\widetilde{SL}(2;\R)$-manifold with its canonical contact structure, then it admits infinitely many strong symplectic fillings that are pairwise non-diffeomorphic and not related by a sequence of blow-ups or blow-downs. 
As a consequence, the link of any cusp singularity, exceptional unimodal singularity, or hyperbolic Brieskorn singularity admits infinitely many pairwise non-diffeomorphic minimal strong symplectic fillings. 
\end{abstract}
\section{\large Introduction}

One of the principal sources of contact $3$-manifolds is the links of isolated hypersurface singularities in three complex variables. 
These singularities have associated Milnor fibers and minimal resolutions, which provide canonical \textit{strong symplectic fillings} of the links. 
Here, by a strong symplectic filling of a closed positive contact $3$-manifold $(M,\xi)$, we mean a compact connected symplectic $4$-manifold $(X, \omega)$ with boundary such that $\partial X= M$ as oriented manifolds and there exists a Liouville vector field $V$ defined near the boundary pointing outward along $\partial X$ which satisfies $\xi=\ker(\iota_{V}\omega)|_{T\partial X}$.

Based on the modality $m$, which measures the complexity of singularities, Arnol'd classified singularities up to modality $2$; see \cite{Arnold-Gusein-Zade-Varchenko}. 
Singularities with $m=0,1$ are called \textit{simple} and \textit{unimodal}, respectively. 
Unimodal singularities fall into three types: 
\textit{simple elliptic} ($\tilde{E}_6, \tilde{E}_7, \tilde{E}_8$), 
\textit{cusp} ($T_{pqr}$), 
and the $14$ exceptional singularities 
($E_{12}$, $E_{13}$, $E_{14}$, $Z_{11}$, $Z_{12}$, $Z_{13}$, $Q_{10}$, $Q_{11}$, $Q_{12}$, $W_{12}$, $W_{13}$, $S_{11}$, $S_{12}$, $U_{12}$). 
The geometry of the links of singularities with $m \leq 1$ is well-studied. 
Neumann \cite{Neumann} showed that the links of simple, simple elliptic, cusp, and the $14$ exceptional singularities are diffeomorphic to quotients of simply connected Lie groups $SU(2)$, $Nil^3$, $Sol^3$, and $\widetilde{SL}(2;\R)$, respectively, by cocompact lattices. 
Moreover, the canonical contact structures on these links agree with those induced by left-invariant contact structures on these simply connected Lie groups \cite{Ehlers-Neumann-Scherk}. 

\begin{table}[htbp]
\centering
\renewcommand{\arraystretch}{1.6}
\begin{tabular}{c|c}
\hline
Symbol & Normal form   \\
\hline
$\tilde{E}_6$ & $x^3+y^3+z^3+axyz, \; a^3+27\ne 0$ \\
$\tilde{E}_7$ & $x^2+y^4+z^4+axyz, \; a^4-64\ne 0$ \\
$\tilde{E}_8$ & $x^2+y^3+z^6+axyz, \; a^6-432\ne 0$ \\
\hline
$T_{pqr}$ & $x^p+y^q+z^r+axyz, \; a\ne 0, \; \frac{1}{p}+\frac{1}{q}+\frac{1}{r}<1$ \\
\hline
$E_{12}$ & $x^2+y^3+z^7+ayz^5$ \\
$E_{13}$ & $x^2+y^3+yz^5+az^8$ \\
$E_{14}$ & $x^2+xz^4+y^3+axz^6$ \\
$Z_{11}$ & $x^2+y^5+yz^3+ay^4z$ \\
$Z_{12}$ & $x^2+zy^3+yz^4+ay^2z^3$ \\
$Z_{13}$ & $x^2+xy^3+yz^3+ay^5z$ \\
$Q_{10}$ & $x^3+y^4+yz^2+axy^3$ \\
$Q_{11}$ & $x^3+xy^3+yz^2+ay^5$ \\
$Q_{12}$ & $x^3+zy^2+yz^3+axz^4$ \\
$W_{12}$ & $x^2+xy^2+z^5+ay^2z^3$\\
$W_{13}$ & $x^2+xy^2+yz^4+az^6$ \\
$S_{11}$ & $x^4+xy^2+yz^2+ax^3y$ \\
$S_{12}$ & $x^2y+y^2z+z^3x+az^5$ \\
$U_{12}$ & $x^4+zy^2+yz^2+ax^2(y^2+yz+z^2)$ \\
\hline
\end{tabular}
\medskip
\caption{Unimodal singularities}
\label{unimodal}
\end{table}

Ohta and Ono \cite{Ohta-Ono_simple} proved that any minimal strong symplectic filling of the link of a simple singularity is diffeomorphic to its Milnor fiber. 
They \cite{Ohta-Ono03} also showed that any minimal strong symplectic filling of the link of a simple elliptic singularity is diffeomorphic to either its Milnor fiber or its minimal resolution. 
This motivates the study of the corresponding question for the links of cusp singularities and of the $14$ exceptional singularities.

The goal of this paper is to prove that the links of all cusp singularities and the $14$ exceptional unimodal singularities admit 
infinitely many pairwise non-diffeomorphic minimal strong symplectic fillings. 
The outline of the proof is as follows. 
We first prove that any closed contact $3$-manifold admits infinitely many symplectic caps with pairwise distinct $b_2^+$. 
A related statement was claimed by Etnyre and Honda \cite[Theorem~1.3]{Etnyre-Honda}. 
However, there is a gap in their proof. 
We fill this gap in our proof (see Remark~\ref{rem_gap} for the gap in their proof and the difference between the two proofs). 
Now we appeal to a result due independently to Geiges \cite{Geiges} and Mitsumatsu \cite{Mitsumatsu} to carry out the construction. 
For an arbitrary $Sol^3$- or $\widetilde{SL}(2;\R)$-manifold $M$ equipped with the contact structure $\xi$ induced by a left-invariant contact structure on the Lie group, they constructed a $4$-dimensional Liouville domain, with disconnected convex boundary, which is diffeomorphic to $[-1,1] \times M$ and one of whose boundary components is contactomorphic to $(M, \xi)$. 
Hence, for the given link $(M,\xi)$ of a cusp or one of the exceptional $14$ singularities, we take a Liouville domain constructed by Geiges and Mitsumatsu with one boundary component contactomorphic to $(M, \xi)$ and cap off the other boundary component by its symplectic cap. 
This yields a strong symplectic filling of $(M,\xi)$. 
As we have infinitely many symplectic caps of any contact $3$-manifold with pairwise distinct $b_2^+$, we obtain the desired family of minimal strong symplectic fillings of the link $(M,\xi)$. 
Similarly, since the link of a hyperbolic Brieskorn singularity is an $\widetilde{SL}(2;\R)$-manifold, it admits infinitely many minimal strong symplectic fillings.

We would like to point out a couple of related results. 
First, Ohta and Ono \cite{Ohta-Ono} constructed an infinite family of minimal strong symplectic fillings of the link of an isolated hypersurface singularity by using a Liouville domain with disconnected convex boundary. 
Moreover, with the help of such Liouville domains, Geiges and Zehmisch \cite{Geiges-Zehmisch} demonstrated that strong symplectic fillings of the unit cotangent bundle of a Riemann surface of genus at least $2$ can have highly complicated topology. 
They proved that there exists a minimal strong symplectic filling of the unit cotangent bundle whose fundamental group is isomorphic to an arbitrary prescribed finitely presented group.

The strong symplectic fillings constructed in this paper contain symplectic tori; hence they are non-exact and, in particular, are not Stein fillings.
In their recent preprint \cite{Baykur-Nemethi-Plamenevskaya}, Baykur, N\'{e}methi and Plamenevskaya showed that for any positive integer $N$, there exists a cusp singularity whose link admits at least $N$ Stein fillings with pairwise different $b_2$. 
This leads to the following question. 

\begin{question}
Among the cusp singularities and the $14$ exceptional unimodal singularities, is there a singularity whose link admits infinitely many Stein fillings up to homotopy equivalent?
\end{question}

This paper is organized as follows. 
In Section~\ref{singularity links}, we review simply connected $3$-dimensional Lie groups whose quotients by cocompact lattices are diffeomorphic to singularity links, and describe left-invariant contact structures on these Lie groups.
We also specify which singularity corresponds to each Lie group. 
Based on results of this section, in Section~\ref{Liouville domain}, we recall the construction of Liouville domains with disconnected convex boundary.
In Section~\ref{section: main thm}, we prove the main theorem, Theorem~\ref{general}. 
We begin by showing that every contact $3$-manifold admits infinitely many symplectic caps with pairwise distinct $b_2^+$.
We then apply this result to prove Theorem~\ref{general}.

\section{\large Surface singularity links and $3$-dimensional Lie groups}~\label{singularity links}
Let $G$ be a simply-connected $3$-dimensional Lie group, and $\Gamma $ its lattice, i.e., a discrete subgroup. 
A lattice $\Gamma $ is said to be cocompact if the quotient $\Gamma \backslash G$ is compact. 
According to \cite{Raymond-Vasquez}, if $\Gamma \backslash G$ is a closed $3$-manifold, 
then $G$ must be one of the following six Lie groups; 
$$SU(2), \; Nil^3, \; Sol^3, \; \widetilde{SL}(2;\R), \; \widetilde{E}^+(2), \; \R^3, $$
where $Nil^3, Sol^3, \widetilde{SL}(2;\R)$ and $\widetilde{E}^+(2)$ are  
the Heisenberg group, the inhomogeneous Lorentz group $E(1,1)$ and 
the universal covering of $PSL(2;\R)$, 
the universal covering of the affine group $E^+(2)$ consisting of orientation preserving isometeries of $\R^2$, respectively. 

Moreover, it is known that the compact quotient $\Gamma \backslash G$ can be diffeomorphic to 
a singularity link only when the Lie group $G$ is one of the four Lie groups 
$SU(2)$, $Nil^3$, $Sol^3$ and $\widetilde{SL}(2;\R)$ (\cite{Neumann}, see also \cite{Seade}). 
In each case, the canonical contact structure on the singularity link 
is induced by a left-invariant contact structure on the Lie group (\cite{Ehlers-Neumann-Scherk}). 
In the following, we describe the structure of the Lie group $G$ and its left-invariant contact structures in each of the four cases, 
and clarify which singularity links can be realized in the form $\Gamma \backslash G$. 

\subsection{$SU(2)$-case} 
Take a basis $\{e_1, e_2, e_3\}$ of the Lie algebra $su(2)$ by 
$$e_1=\frac{1}{2}\begin{pmatrix} -i&0 \\ 0&i \end{pmatrix}, 
e_2=\frac{1}{2}\begin{pmatrix} 0&-1 \\ 1&0 \end{pmatrix}, 
e_3=\frac{1}{2}\begin{pmatrix} 0&-i\\-i&0 \end{pmatrix}.$$ 
Then we have $[e_1,e_2]=-e_3, [e_2, e_3]=-e_1, [e_3, e_1]=-e_2$ and hence, 
for the dual basis $\{e_1^{\ast }, e_2^{\ast}, e_3^{\ast}\}$, 
$$de_1^{\ast}=e_2^{\ast}\wedge e_3^{\ast}, \; de_2^{\ast }=e_3^{\ast}\wedge e_1^{\ast}, \; de_3^{\ast}=e_1^{\ast}\wedge e_2^{\ast}.$$
Therefore, for any $(a,b,c)\in \R^3\setminus \{ {\bf 0} \}$, 
$ae_1^{\ast }+be_2^{\ast}+ce_3^{\ast}$ defines a positive left-invariant contact form, 
whose kernel is contactomorphic to the standard contact structure $\xi_0$ on $S^3$. 

It is well known that the finite subgroups of $SU(2)$ are precisely the finite cyclic groups $C_r$ of order $r$ and the binary triangle groups $\Gamma(2,2,r)$, $\Gamma(2,3,3)$, $\Gamma(2,3,4)$ and $\Gamma(2,3,5)$. On the other hand, $SU(2)$ acts freely on $\C^2$ except at the only fixed point ${\bf 0}$. Therefore, for each finite subgroup $\Gamma\subset SU(2)$, one obtains an isolated surface singularity $(\Gamma\backslash\C^2,{\bf 0})$. Such a singularity is called a {\it simple singularity}. 
Klein studied simple singularities from the viewpoint of polyhedral groups and invariant polynomials, and showed that they can be realized as the following hypersurface singularities in $\C^3$:
\begin{eqnarray*}
A_n: x^2+y^2+z^{n+1}=0 \;\; &\text{if}&  \Gamma =C_{n+1} \; (n\geq 1),\\
D_n: x^2+y^2z+z^{n-1}=0  &\text{if}&  \Gamma =\Gamma(2,2,n-2) \; (n\geq 4),\\
E_6: x^2+y^3+z^4=0 \;\;\;\;\;\; &\text{if}&  \Gamma =\Gamma (2,3,3),\\
E_7: x^2+y^3+yz^3=0 \;\;\;\; &\text{if}&  \Gamma =\Gamma(2,3,4),\\
E_8: x^2+y^3+z^5=0 \;\;\;\;\;\; &\text{if}&  \Gamma =\Gamma(2,3,5).
\end{eqnarray*}
The singularity link is diffeomorphic to the compact quotient $\Gamma \backslash SU(2)$. 
Since the standard contact structure $\xi _0$ on $SU(2)\cong S^3$ is left-invariant, 
it descends to a contact structure on $\Gamma \backslash SU(2)$,  
which coincides with the canonical contact structure on the singularity link. 

\subsection{$Nil^3$-case}
The Lie group $Nil^3$ is the Heisenberg group, i.e., 
the group of $3\times 3$ upper triangular matrices of the form $\begin{pmatrix}1&x&z\\0&1&y\\0&0&1\end{pmatrix}$. 
Take a basis $\{e_1, e_2, e_3\}$ of the Lie algebra $nil^3$ by 
$$
e_1=\begin{pmatrix} 0&0&0 \\ 0&0&1 \\ 0&0&0 \end{pmatrix}, 
e_2=\begin{pmatrix} 0&1&0 \\ 0&0&0 \\ 0&0&0 \end{pmatrix}, 
e_3=\begin{pmatrix} 0&0&1 \\ 0&0&0 \\ 0&0&0 \end{pmatrix}.$$ 
Then, we have $[e_1, e_2]=-e_3, [e_2, e_3]=0, [e_3, e_1]=0$. 
Hence, for the dual basis $\{e_1^{\ast }, e_2^{\ast }, e_3^{\ast }\}$, 
$$de_1^{\ast}=0, \; de_2^{\ast}=0, \; de_3^{\ast}=e_1^{\ast}\wedge e_2^{\ast}.$$
Therefore, $e_3^{\ast}$ defines a positive left-invariant contact form on $Nil^3$. 

For a cocompact lattice $\Gamma $, 
the compact quotient $\Gamma \backslash Nil^3$ is called a $Nil^3$-manifold. 
It is diffeomorphic to the total space of an orientable $S^1$-bundle over $T^2$, 
which also fibers over $S^1$ as a parabolic $T^2$-bundle. 

\begin{definition}[simple elliptic singularity]
A normal surface singularity is called a {\it simple elliptic singularity} if the exceptional set of its minimal resolution is a non-singular elliptic curve of negative self-intersection number $-k$. The number $k>0$ is called the {\it degree} of the simple elliptic singularity. 
\end{definition}
It is clear that the link of a simple elliptic singularity is a {\it $Nil^3$-manifold}. 
Conversely, a normal surface singularity is simple elliptic if its link is a $Nil^3$-manifold \cite{Neumann2}. 
The simple elliptic singularities of degrees $3,2$ and $1$ are precisely the hypersurface singularities $\tilde{E}_6, \tilde{E}_7$ and $\tilde{E}_8$,  respectively, which appear in the list of unimodal singularities \cite{Saito}. 
In each case, the canonical contact structure of the link coincides with 
the positive contact structure on $\Gamma \backslash Nil^3$ induced by $e_3^{\ast}$.

\subsection{$Sol^3$-case}
The Lie group $Sol^3$ is the split extension of $\R^2$ by $\R$ whose group structure is given by 
$$(x,y; z)\cdot (x',y'; z')=(x+e^zx', y+e^{-z}y'; z+z') \;\; \text{on} \;\;  \R^2\times \R.$$
Take a basis $\{e_1, e_2, e_3\}$ of the Lie algebra $sol^3$ by 
$$e_1=e^z\partial _x-e^{-z}\partial_y, \; e_2=e^z\partial _x+e^{-z}\partial_y, \; e_3=\partial _z.$$ 
Then we have $[e_1,e_2]=0, [e_2, e_3]=-e_1, [e_3, e_1]=e_2$. 
Hence, for the dual basis $\{e_1^{\ast }, e_2^{\ast}, e_3^{\ast}\}$, 
$$de_1^{\ast}=e_2^{\ast}\wedge e_3^{\ast}, \; de_2^{\ast }=e_1^{\ast}\wedge e_3^{\ast}, \; de_3^{\ast}=0.$$
Therefore, $e_1^{\ast}$ and $e_2^{\ast}$ are positive and negative contact forms, respectively. 
When $\Gamma $ is a cocompact lattice of $Sol^3$, the compact quotient $\Gamma \backslash Sol^3$ is called a {\it $Sol^3$-manifold}. It is diffeomorphic to the total space of a hyperbolic $T^2$-bundle over $S^1$. 
Let $A\in SL(2;\Z)$ with $\mathrm{tr} (A)>2$ be the monodromy of this $T^2$-bundle, and denote its mapping torus by $T_A$. Then $\Gamma \backslash Sol^3\cong T_A$. Moreover, $T_A$ carries a positive contact structure $\xi _+$ and a negative contact structure $\xi _-$ induced by $e_1^{\ast}$ and $e_2^{\ast}$, respectively.

\begin{definition}[cusp singularity]
A normal surface singularity is called a {\it cusp singularity} if the exceptional set of its minimal resolution is a rational curve with one node or a cycle of rational curves. 
\end{definition}

It is easy to verify that the link of a cusp singularity is a $Sol^3$-manifold. 
Conversely, a normal surface singularity is a cusp singularity if its link is a $Sol^3$-manifold \cite{Neumann2}.  
In fact, cusp singularities are precisely the singularities known as {\it Hilbert modular cusps} \cite{Hirzebruch, Laufer, Laufer2}. 
Those cusp singularities that can be realized as hypersurface singularities are exactly the $T_{pqr}$-singularities appearing in the list of unimodal singularities \cite{Karras}. 
Moreover, the canonical contact structure on the link is contactomorphic to $(T_A, \xi _+)$ \cite{Kasuya}. 

\subsection{$\widetilde{SL}(2;\R)$-case}
Take a basis $\{e_1, e_2, e_3\}$ of the Lie algebra $psl(2;\R)$ by 
$$e_1=\frac{1}{2}\begin{pmatrix} 0&-1 \\ 1&0 \end{pmatrix}, 
e_2=\frac{1}{2}\begin{pmatrix} 1&0 \\ 0&-1 \end{pmatrix}, 
e_3=\frac{1}{2}\begin{pmatrix} 0&1 \\ 1&0 \end{pmatrix}.$$ 
Then we have $[e_2,e_3]=-e_1, [e_3, e_1]=e_2, [e_1, e_2]=e_3$. 
Hence, for the dual basis $\{e_1^{\ast }, e_2^{\ast}, e_3^{\ast}\}$, 
$$de_1^{\ast}=e_2^{\ast}\wedge e_3^{\ast}, \; de_2^{\ast}=e_1^{\ast}\wedge e_3^{\ast}, \; de_3^{\ast }=e_2^{\ast}\wedge e_1^{\ast}.$$
Therefore, $e_1^{\ast}$ is a positive contact form, while $e_2^{\ast}$ and $e_3^{\ast}$ are negative contact forms. 
For a cocompact lattice $\Gamma $, 
the quotient $\Gamma \backslash \widetilde{SL}(2;\R)$ is called an {\it $\widetilde{SL}(2;\R)$-manifold}. 
Compared with the other three cases, isolated surface singularities whose links are $\widetilde{SL}(2;\R)$-manifolds form a broader class.
\begin{definition}[quasihomogeneous singularity]
A normal surface singularity is said to be {\it quasihomogeneous (q.h.)} if it admits a good $\C^{\ast}$-action. 
\end{definition}
The link of a Gorenstein quasihomogeneous singularity is an $\widetilde{SL}(2;\R)$-manifold 
if and only if the singularity is neither simple nor simple elliptic. Such a singularity is said to be {\it hyperbolic quiasihomogeneous}. 
Conversely, any $\widetilde{SL}(2;\R)$-manifold can be realized as the link of a hyperbolic quasihomogeneous singularity \cite{Dolgachev, Neumann}. The canonical contact structure on such a singularity link $\Gamma\backslash \widetilde{SL}(2;\R)$ is induced by $e_1^{\ast }$. 

A hyperbolic quasihomogeneous singularity is called a {\it triangle singularity} if the lattice $\Gamma $ descends to the triangle group $\Sigma (p,q,r)$ $(\frac{1}{p}+\frac{1}{q}+\frac{1}{r}<1)$ in $PSL(2;\R)$. Dolgachev~\cite{Dolgachev2} showed that only $14$ triangle singularities can be realized as hypersurface singularities in $\C^3$. 
They correspond to the $14$ exceptional unimodal singularities with parameter $a=0$. 
Note that when $a\ne 0$, each exceptional unimodal singularity is non-quasihomogeneous. 
However, such a singularity is a deformation of one of the $14$ hypersurface triangle singularities. 
Since such a deformation preserves both the diffeomorphism type of the link and its canonical contact structure, we may, for our purposes, regard any exceptional unimodal singularity as a hypersurface triangle singularity.
Therefore, we shall not discuss non-quasihomogeneous exceptional unimodal singularities separately. 

\begin{table}[htbp]
\centering
\renewcommand{\arraystretch}{1.2}
\begin{tabular}{c|c|c|c}
\hline
Lie group $G$ & Singularity & Quadratic form & Left inv. contact forms  \\
\hline
$SU(2)$ & simple & $a^2+b^2+c^2$ & (+) $e_1^{\ast}$ ($e_2^{\ast}, e_3^{\ast}$) \\
$Nil^3$ & simple elliptic & $c^2$ & (+) $e_3^{\ast}$ \\
$Sol^3$ & cusp & $a^2-b^2$ & (+) $e_1^{\ast}\;$ (-) $e_2^{\ast}$ \\
$\widetilde{SL}(2;\R)$ & hyperbolic q.h. & $a^2-b^2-c^2$ & (+) $e_1^{\ast}\;$ (-) $e_2^{\ast}$ ($e_3^{\ast}$) \\
\hline
\end{tabular}
\medskip
\caption{Lie groups, singularities, quadratic forms and left invariant contact forms. In the second column, a singularity means one whose link is diffeomorphic to $\Gamma \backslash G$.
In the third column, the quadratic forms $q(a,b,c)$ defined in Remark~\ref{quadratic} are listed. 
In the fourth column, the $1$-forms $e_1^{\ast}$, $e_2^{\ast}$, and $e_3^{\ast}$ on $G$ are defined above, and the signs indicate the signs of contact structures.}
\label{Lie contact}
\end{table}

\begin{remark}\label{quadratic}
Notice that in all four cases, it holds that $$e^{\ast }_i\wedge de^{\ast}_j=0 \;\; \text{if} \;\; i\ne j.$$ 
Consequently, a left-invariant $1$-form $ae^{\ast }_1+be^{\ast }_2+ce^{\ast }_3$ defines a positive contact structure (resp. a negative contact structure, a foliation) if and only if the value of the diagonalized quadratic form $q (a, b, c)$ is positive (resp. negative, zero), where 
$$(ae^{\ast }_1+be^{\ast }_2+ce^{\ast }_3)\wedge d(ae^{\ast }_1+be^{\ast }_2+ce^{\ast }_3)
=q(a,b,c) (e^{\ast}_1\wedge e^{\ast}_2\wedge e^{\ast}_3).$$
For the explicit quadratic form in each case, see Table~\ref{Lie contact}. 
\end{remark}

\begin{remark}
Extending Klein's work on simple singularities, Milnor \cite{Milnor} showed that 
a 3-dimensional Brieskorn manifold $M(p,q,r)$, the link of a Brieskorn singularity $x^p+y^q+z^r=0$, 
admits one of the three geometries $SU(2), Nil^3$ or $\widetilde{SL}(2;\R)$ according as $\frac{1}{p}+\frac{1}{q}+\frac{1}{r}-1$ is positive, zero or negative. The works of Dolgachev~\cite{Dolgachev} and Neumann~\cite{Neumann} on quasihomogeneous singularities built upon Milnor's result. The cases corresponding to the geometries $SU(2), Nil^3$ and $\widetilde{SL}(2;\R)$, with the exception of non-quasihomogeneous exceptional unimodal singularities, all fall within this framework. 
Moreover, the canonical contact structure on the singularity link can be identified easily, since the contact form can be deformed so that all Reeb orbits coincide with the $S^1$-orbits generated by the $\C^{\ast}$-action.

On the other hand, cusp singularities are not quasihomogeneous and therefore must be treated separately as Hilbert modular cusps. However, thanks to the foundational work of Hirzebruch \cite{Hirzebruch}, the theory is highly developed, and it is not difficult to understand the geometry of the singularity link, including the contact structure on it. 
\end{remark}

\section{Liouville domains with disconnected convex boundaries}\label{Liouville domain}
The first example of a Liouville domain with disconnected convex boundary was constructed by McDuff~\cite{McDuff}. 
She used the cotangent bundle of a closed hyperbolic surface and its geodesic flow for the construction.  
Later, Geiges~\cite{Geiges} and Mitsumatsu~\cite{Mitsumatsu} independently generalized her construction from the viewpoint of $3$-dimensional Lie groups. \\

\begin{theorem}[Geiges~\cite{Geiges}, Mitsumatsu~\cite{Mitsumatsu}]~\label{GM}
Let $M$ be a $Sol^3$-manifold or an $\widetilde{SL}(2;\R)$-manifold. 
Then the product $[-1,1]\times M$ admits a structure of a Liouville domain 
with disconnected convex boundary $\{-1\}\times M \sqcup \{1\}\times M$. 
\end{theorem}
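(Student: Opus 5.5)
The plan is to build an explicit left-invariant Liouville form on $\R\times G$ and push it down to $[-1,1]\times \Gamma\backslash G$. By Remark~\ref{quadratic} and Table~\ref{Lie contact}, in both the $Sol^3$ and $\widetilde{SL}(2;\R)$ cases the quadratic form $q$ is indefinite. The left-invariant forms $e_1^{\ast}$ (positive contact) and $e_2^{\ast}$ (negative contact) are therefore both available, and they satisfy $e_1^{\ast}\wedge de_2^{\ast}=e_2^{\ast}\wedge de_1^{\ast}=0$. On $\R\times G$ with coordinate $t$, set
$$\lambda=e^{t}e_1^{\ast}+e^{-t}e_2^{\ast}.$$
Since $\lambda$ is left-invariant in the $G$-direction, it descends to $\R\times\Gamma\backslash G$ for any cocompact lattice $\Gamma$.

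\textbf{Symplecticity.} Compute
$$d\lambda=dt\wedge(e^{t}e_1^{\ast}-e^{-t}e_2^{\ast})+e^{t}de_1^{\ast}+e^{-t}de_2^{\ast}.$$
Using $e_i^{\ast}\wedge de_j^{\ast}=0$ for $i\neq j$, we get
$$d\lambda\wedge d\lambda=2\,dt\wedge\bigl(e^{2t}e_1^{\ast}\wedge de_1^{\ast}-e^{-2t}e_2^{\ast}\wedge de_2^{\ast}\bigr).$$
Here $e_1^{\ast}\wedge de_1^{\ast}$ is a positive multiple of the volume form and $e_2^{\ast}\wedge de_2^{\ast}$ is a negative multiple; concretely, both equal $\pm e_1^{\ast}\wedge e_2^{\ast}\wedge e_3^{\ast}$ in both groups. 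So the bracket is a positive multiple of the volume form for every $t$, and $d\lambda$ is symplectic on all of $\R\times\Gamma\backslash G$. The cancellation of the cross terms is exactly where Remark~\ref{quadratic} is used.

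\textbf{Convexity of both ends.} The Liouville vector field $V$ is defined by $\iota_V d\lambda=\lambda$. Restricted to a slice $\{t\}\times M$, $\lambda$ is a contact form, because $q(e^{t},e^{-t},0)=e^{2t}-e^{-2t}$ up to the constant normalizations, and it is the sign of this quantity that matters. A cleaner route is to check directly that for $|t|$ large the slice is of contact type with the correct coorientation. At $t=1$ the form $\lambda|_{\{1\}\times M}$ is dominated by $e\,e_1^{\ast}$, a positive contact form. At $t=-1$ it is dominated by $e^{-1}e_2^{\ast}$, a negative contact form on $M$, hence a positive contact form for the reversed orientation. This is what $\{-1\}\times M$ needs as an oriented boundary component, where the outward normal is $-\partial_t$.

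To make these statements precise, I will verify that $V$ is transverse to each slice and points in the direction of increasing $|t|$. Write $V=f\partial_t+W$ with $W$ tangent to $G$. Contracting $\iota_Vd\lambda=\lambda$ with the frame $e_1,e_2,e_3$ gives a linear system. Since $de_1^{\ast}=e_2^{\ast}\wedge e_3^{\ast}$ and $de_2^{\ast}=e_1^{\ast}\wedge e_3^{\ast}$ (in both cases), it can be solved explicitly with constant coefficients depending only on $t$, yielding $f$ as an explicit function of $t$.

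\textbf{Main obstacle.} The hardest step is the sign and degeneracy issue in the middle: near $t=0$ the restriction $\lambda|_{\{t\}\times M}$ passes through $q=0$ and is a foliation, so the slices are not contact there. I must ensure that $f$ still has the correct sign at $t=\pm1$, possibly after rescaling $t$ (for example, using $e^{kt}$ with $k$ large, or taking the slices $t=\pm T$ for large $T$ and reparametrizing $[-T,T]$ to $[-1,1]$). The weights can also be adjusted to $\lambda=\phi(t)e_1^{\ast}+\psi(t)e_2^{\ast}$, with $\phi$ increasing, $\psi$ decreasing, and $\phi\phi'$, $-\psi\psi'$ chosen so that both the symplectic condition and the outward-pointing condition hold at both ends. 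Once $f(\pm T)$ has sign $\pm$, the region $[-T,T]\times M$ is a Liouville domain whose boundary $\{-T\}\times M\sqcup\{T\}\times M$ is convex, which proves Theorem~\ref{GM}.
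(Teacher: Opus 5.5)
Your approach is essentially the paper's, and it works. The paper uses linear weights, $\alpha=\frac12\big((1+t)\alpha_1+(1-t)\alpha_2\big)$ on $[-1,1]\times M$, where you use $e^{t}e_1^*+e^{-t}e_2^*$; in both cases the cancellation $e_1^*\wedge de_2^*=e_2^*\wedge de_1^*=0$ does all the work.

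The step you flag as the main obstacle is not one. You only need to finish the computation you set up. For $\lambda=\phi(t)e_1^*+\psi(t)e_2^*$ one has $(d\lambda)^2=2(\phi\phi'-\psi\psi')\,dt\wedge e_1^*\wedge e_2^*\wedge e_3^*$. Solving $\iota_Vd\lambda=\lambda$ gives
\[
V=\frac{\phi^2-\psi^2}{\phi\phi'-\psi\psi'}\,\partial_t+\frac{\phi\psi'-\phi'\psi}{\phi\phi'-\psi\psi'}\,e_3 .
\]
For your weights this is $V=\tanh(2t)\,\partial_t-\frac{1}{\cosh(2t)}\,e_3$. For the paper's weights it is $2t\,\partial_t-v_3$; the paper writes $t\,\partial_t-v_3$, a harmless slip. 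So the $\partial_t$-component of $V$ has the sign of $t$ for every $t\neq 0$. Hence $[-1,1]\times M$ works as it stands, with no rescaling or re-choice of weights.

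The foliation slice at $t=0$ is irrelevant. Interior slices need not be of contact type, and $t=0$ is simply where the $\partial_t$-component of $V$ changes sign, as it must when both ends are convex.

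Your own slice check already finishes the proof. Since $\iota_V(d\lambda)^2=2\lambda\wedge d\lambda$ and $(d\lambda)^2$ is a positive multiple of $dt\wedge e_1^*\wedge e_2^*\wedge e_3^*$, the $\partial_t$-component of $V$ along $\{t\}\times M$ is positive (resp.\ negative) exactly when $\lambda|_{\{t\}\times M}$ is a positive (resp.\ negative) contact form on $M$. That is, it is positive when $e^{2t}-e^{-2t}>0$ and negative when $e^{2t}-e^{-2t}<0$, which is the sign you computed at $t=\pm1$.
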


\begin{proof}
In each of the $Sol^3$ and $\widetilde{SL}(2;\R)$ cases, there exists a pair $(e_1^{\ast}, e_2^{\ast})$ of positive and negative contact forms. 
Let $\alpha _1$ and $\alpha _2$ denote the contact froms on $M$ induced by $e_1^{\ast}$ and $e_2^{\ast}$, respectively. 
We define a $1$-form $\alpha $ on the product $[-1,1]\times M$ by 
$$\alpha =\frac{1}{2}\Big((1+t)\alpha _1+(1-t)\alpha _2\Big),$$
where $t$ denotes the coordinate on the interval $[-1,1]$. 
Then $\alpha $ is a Liouville form, and the associated Liouville vector field is given by $X=t\partial _t-v_3$, 
where $v_3$ is a vector field on $M$ induced by $e_3$. 
The vector field $X$ is outward transverse to the boundary $\partial ([-1,1]\times M)$. 
Hence, $[-1,1]\times M$ is a Liouville domain and its boundary is convex. 
\end{proof}

As Geiges \cite{Geiges} pointed out, the key point of the above construction is the existence on  
$Sol^3$ and $\widetilde{SL}(2;\R)$ of a pair $(e_1^{\ast}, e_2^{\ast})$ of positive and negative contact forms satisfying 
$$e^{\ast}_1\wedge de^{\ast}_2=e^{\ast}_2\wedge de^{\ast}_1=0.$$ 
Mitsumatsu \cite{Mitsumatsu} explained the geometric meaning of this condition from the viewpoint of so-called {\it algebraic Anosov flows}, 
and called the associated pair $(\xi _+=\ker \alpha _1, \xi_-=\ker \alpha _2)$ of positive and negative contact structures a {\it bi-contact structure}. 

\section{\large Main Theorem}\label{section: main thm}

In this section, we will prove the main theorem in this paper. 
We begin with a result about the topology of symplectic caps of contact $3$-manifolds. 
Here, 
by a \textit{symplectic cap} of a closed positive contact $3$-manifold $(M, \xi)$, we mean 
a compact symplectic $4$-manifold $(X, \omega)$ with boundary such that $\partial X= -M$ as oriented manifolds and there exists a Liouville vector field $V$ defined near the boundary pointing inward along $\partial X$ which satisfies $\xi =\ker (\iota _V\omega )|_{T\partial X}$.

\begin{theorem}\label{caps}
Any closed positive contact $3$-manifold admits infinitely many symplectic caps whose $b_2^+$-invariants are pairwise distinct. 
\end{theorem}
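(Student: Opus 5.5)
Fix a closed positive contact $3$-manifold $(M,\xi)$. The plan is to build one symplectic cap $P$ of $(M,\xi)$ and then enlarge its $b_2^+$ without bound, one step at a time, by symplectic operations made away from $\partial P$.

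\textbf{First cap.} By the Giroux correspondence, $(M,\xi)$ is supported by an open book. Write the monodromy as a product of positive and negative Dehn twists. Attach Weinstein $2$-handles to $[0,1]\times M$ along Legendrian push-offs of the negatively twisted curves; this gives a Stein cobordism whose concave end is $(M,\xi)$ and whose convex end is supported by an open book with positive-twist-only monodromy, so that end is Stein fillable. Continue by further Weinstein $2$-handles, for instance along a Legendrian binding component, or use the Eliashberg/Etnyre capping theorem for planar ends to reach a contact manifold with a concave symplectic cap. Gluing gives a symplectic cap $P$ of $(M,\xi)$. Alternatively, I will take $(M,\xi)$ as a contact-type hypersurface in a closed symplectic $4$-manifold (Eliashberg, Etnyre) and let $P$ be the concave side.

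\textbf{Raising $b_2^+$.} I will arrange that $P$ contains a symplectic torus $T$ of square zero disjoint from $\partial P$. To get one, embed $(M,\xi)$ into a closed symplectic $Z$, choose $Z$ containing such a torus (e.g.\ blow up or fiber-sum with an elliptic surface away from $M$), and take $P$ to be the side containing $T$. Then form
\[
P_n = P \#_{T=F} \bigl(\text{$n$ copies of } E(1) \text{ fiber-summed}\bigr) = P\#_{T=F}E(n),
\]
the Gompf symplectic fiber sum along $T$ and a regular fiber $F$. The fiber sum is done in the interior, so the Liouville collar is untouched and every $P_n$ is again a symplectic cap of $(M,\xi)$.

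\textbf{Computing $b_2^+$.} This is the step I expect to be the main obstacle. Novikov additivity across the gluing $T^3$ gives the signature, and an Euler characteristic count gives $\chi(P_n)=\chi(P)+12n$. What is still needed is control of $b_1$ and $b_3$ of $P_n$ relative to $\partial$. Using $\pi_1(E(n)\setminus \nu F)=1$, one gets $b_1(P_n)\le b_1(P)$. For $b_3$ and the kernel of the intersection form, I will use Mayer--Vietoris with the rim tori. From this I expect $b_2^+(P_n)=b_2^+(P)+2n-1+\varepsilon$, with $\varepsilon$ bounded independently of $n$, so $b_2^+(P_n)\to\infty$ and passing to a subsequence gives pairwise distinct values.

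If the bookkeeping with a boundary becomes unwieldy, the fallback is cleaner: choose the torus $T$ with $[T]$ nontrivial and homologically essential (for example dual to a class with positive square), so that $b_2^+$ genuinely increases by exactly $2n-1$. The delicate point is making sure that the increase in $b_2^+$ is not absorbed into the radical of the intersection form coming from $H_2(\partial P)$.
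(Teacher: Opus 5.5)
\textbf{Gap: the square-zero symplectic torus in the cap.} The skeleton of your argument is the paper's: build one cap containing a square-zero symplectic torus, fiber-sum with $E(n)$ in the interior, and bound the change in $b_2^+$. The step that fails is the one carrying the real content, namely producing a square-zero symplectic torus $T$ in the interior of a cap of an \emph{arbitrary} $(M,\xi)$.

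Your recipe is to embed $(M,\xi)$ as a separating contact-type hypersurface in a closed symplectic $Z$ and take the side containing $T$. This fails in several ways:
\begin{itemize}
\item Such an embedding makes the complementary side a strong filling of $(M,\xi)$. It therefore cannot exist when $(M,\xi)$ is not fillable (e.g.\ overtwisted), and the Eliashberg/Etnyre embedding theorems you cite assume fillability.
\item Even for fillable $\xi$, blowing up produces spheres, not square-zero tori.
\item Fiber-summing $Z$ with an elliptic surface presupposes a square-zero symplectic torus already in $Z$, so that route is circular.
\item Nothing puts $T$ on the concave side rather than in the filling.
\end{itemize}
Your open-book construction does not supply $T$ either. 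The Weinstein-handle cobordism is exact, so by Stokes it contains no closed symplectic surface; this is exactly the error in Etnyre--Honda's argument discussed in Remark~\ref{rem_gap}. You also give no reason why the cap glued onto the Stein-fillable end should contain such a torus.

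\textbf{The missing idea.} Build $T$ first as a \emph{Lagrangian} torus in an exact cobordism, and only afterwards make it symplectic. The paper does this as follows.
\begin{itemize}
\item In a Darboux ball on the convex end of $[-1,1]\times M$, attach Weinstein $2$-handles along a max-$tb$ right-handed Legendrian trefoil $L$ and along a Legendrian unknot $L'$ linking $L$ once.
\item Take $T$ to be the genus-one exact Lagrangian filling of $L$ (Ekholm--Honda--K\'alm\'an) glued to the Lagrangian core disk of the handle on $L$. Being Lagrangian, $T$ has trivial normal bundle.
\item The sphere formed by the core of the handle on $L'$ and a Seifert disk of $L'$ meets $T$ once. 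Hence $[T]$ is non-torsion in $H_2(C,\partial C)$.
\item That non-torsion condition is exactly what Gompf's Lemma~1.6 needs to deform the form, by a closed $2$-form supported away from the ends, so that $T$ becomes symplectic.
\item Only then is the convex end capped, using Gay's theorem, which needs no fillability. The fiber sum with $E(n)$ then proceeds as you planned.
\end{itemize}

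\textbf{On the $b_2^+$ count.} Your bookkeeping is essentially the paper's, and the ``delicate point'' is harmless. The radical of the intersection form on $H_2(P;\Q)$ is the image of $H_2(\partial P;\Q)$, so $b_2^0\le b_2(M)$ for $P$ and for every $P_n$. Also $b_3=\rank H_1(P,\partial P)$ differs from $b_1$ by at most $b_1(M)=b_2(M)$. Combined with signature and Euler characteristic, this gives $|b_2^+(P_n)-b_2^+(P)-2n|\le b_2(M)+2$, so no fallback is needed. The fallback's ``exact increment $2n-1$'' is wrong in general: already for closed manifolds the increment is $2n+b_1(P_n)-b_1(P)$.
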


\begin{proof}
Let $(M, \xi )$ be any closed positive contact $3$-manifold. Taking its symplectization, we obtain a symplectic cobordism $Y$ from $(M, \xi )$ to itself which is diffeomorphic to the product $[-1,1]\times M$. 

Now we take a Darboux ball $U$ in the convex side $\{1\}\times M$. 
Inside $U$, choose a right-handed Legendrian trefoil $L$ with maximal Thurston--Bennequin number and a Legendrian unknot $L'$ linking with $L$ once.
According to \cite{Ekholm-Honda-Kalman}, the Legendrian knot $L$ admits a genus-one Lagrangian filling $S$ in $[-1,1]\times U$ so that $S\cap ([0,1]\times U)=[0,1]\times L$. 

Attaching Weinstein $2$-handles $H$ and $H'$ to $Y$ along $L$ and $L' $ in $\{1\}\times M$, respectively, we obtain an exact symplectic cobordism
\[
C=Y\cup_L H \cup_{L'} H'.
\] 
Let $D$ be the core Lagrangian disk of the Weinstein $2$-handle $H$. 
Then, $S$ and $D$ are naturally glued together along $L$ to form a Lagrangian torus $T$ inside the cobordism $C$. 
Notice that $T$ represents a nontrivial torsion-free class in the relative homology group $H_2(C, \partial C)$ since $[T] \in H_2(C, \partial C)$ has a nontrivial intersection with the homology class represented by a sphere arising from the attachment of $H'$.
Hence we can apply Gompf's theorem \cite[Lemma~1.6]{Gompf}. Namely, we can make $T$ a symplectic torus with trivial normal bundle by perturbing the symplectic form only near the torus $T$. 
By a slight abuse of notation, we still denote the deformed symplectic cobordism by $C$. 
By Gay's result \cite[Theorem~1.1]{Gay}, the convex side $\partial _+C$ of $C$ can be capped by some symplectic cap $V$. 
Thus we obtain a symplectic cap $X:=C\cup V$ of $(M, \xi )$. 
Let $\pi \colon E(n)\to \C P^1$ be the elliptic fibration obtained as the symplectic sum of $n$ copies of rational elliptic surface $E(1)$. Taking the symplectic sum of $X$ and the elliptic surface $E(n)$ along $T$ and a regular fiber of $\pi $, we obtain a new symplectic cap  $X_n$ of $(M, \xi )$. 
As will be shown in Lemma~\ref{estimate 2}, the absolute value
$$
|b_2^+(X_n)-b_2^+(X)-2n|
$$ 
is bounded above by a constant independent of $n$.
Thus $\{b_2^+(X_n) \mid n\in \Z_{>0} \}$ is an infinite set. 
This completes the proof. 
\end{proof}

\begin{remark}\label{rem_gap}
Etnyre and Honda~\cite[Theorem~1.3]{Etnyre-Honda} proved that any closed positive contact $3$-manifold admits a symplectic cap (this was also reproven by Gay \cite{Gay} and Ding--Geiges \cite{Ding-Geiges}). 
In the same theorem, they further claimed that any closed positive contact $3$-manifold admits infinitely many symplectic caps that are pairwise non-diffeomorphic and are not related by a sequence of blow-ups and blow-downs. 
However, there is a gap in their proof of the latter claim. 
Specifically, they asserted that the Stein cobordism they construct contains a symplectically embedded torus, 
which is impossible since an exact symplectic manifold does not contain any closed symplectic submanifold. 
The above proof of Theorem~\ref{caps} fills this gap by constructing a Lagrangian torus $T$ and applying Gompf's theorem. 

Our construction of symplectic caps also differs from that of Etnyre and Honda. 
In \cite{Etnyre-Honda}, they first construct a Stein cobordism from a given contact manifold to a \textit{Stein fillable} contact manifold. 
They then appeal to a result of Lisca--Mati\'{c} \cite{Lisca-Matic} to cap off this Stein fillable contact boundary. 
In our construction, by contrast, the contact manifold at the positive end of the cobordism $C$ in the above does not need to be Stein fillable; 
instead, we cap it off using a result of Gay~\cite{Gay}.
We also note that our Theorem~\ref{caps} improves upon~\cite[Theorem~1.3]{Etnyre-Honda} in the sense that it directly estimates the $b_2^+$-invariants of the symplectic caps $X_n$, which is crucial for the proof of our main theorem. 

\end{remark}

In the following, we denote the topological invariants of $X$ and $X_n$ by the notations without and with tildes, respectively. For example, $b_i$ denotes the $i$-th Betti number of $X$ and 
$b_2^+$ (resp. $b_2^-, b_2^0$) denotes the rank of maximal positive-definite (resp. negative-definite, null) subspace with respect to the intersection form on $H_2(X; \Q)$. 
Similarly, $\tilde{b}_i, \tilde{b}_2^+, \tilde{b}_2^-$ and $\tilde{b}_2^0$ denote the corresponding invariants of $X_n$. 
We now estimate the absolute value $|\tilde{b}_2^+-b_2^+-2n|$ from above. 

\begin{lemma}\label{estimate}
The following inequalities hold: 
$$-2\leq \tilde{b}_1-b_1\leq 0,\quad
-b_2(M) \leq \tilde{b}_2^0-b_2^0 \leq b_2(M),
$$
$$
-b_2(M)-2\leq \tilde{b}_3-b_3\leq b_2(M).
$$
\end{lemma}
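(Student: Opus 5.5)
The three estimates concern the $4$-manifolds $X$ and $X_n$, both with boundary $-M$, so I would compare each invariant against data of $M$ and against the effect of the symplectic sum on first homology. Write $N(T)\subset X$ and $N(F)\subset E(n)$ for tubular neighborhoods, both diffeomorphic to $T^2\times D^2$. Set $X^0=X\setminus \mathrm{int}\,N(T)$ and $E(n)^0=E(n)\setminus \mathrm{int}\,N(F)$, so that $X_n=X^0\cup_{T^3}E(n)^0$.

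\textbf{The estimate on $b_1$.} I will use the standard fact that $E(n)^0$ is simply connected. This holds because a regular fiber of $E(n)$ has a cusp-fiber neighborhood, and the vanishing cycles kill the fiber's $\pi_1$ and the meridian.
\begin{itemize}
\item Mayer--Vietoris for $X_n=X^0\cup E(n)^0$ over $T^3$ gives $H_1(X_n)\cong H_1(X^0)/\mathrm{im}\,H_1(T^3)$.
\item Mayer--Vietoris for $X=X^0\cup N(T)$ gives $H_1(X)\cong H_1(X^0)/\langle \mu\rangle$, where $\mu$ is the meridian of $T$.
\item The group $H_1(T^3)$ is generated by $\mu$ and the two generators of $H_1(T)$ pushed off via the trivialization of the normal bundle.
\end{itemize}
Hence $H_1(X_n)\cong H_1(X)/\mathrm{im}\,H_1(T)$. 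This quotient has rank between $b_1-2$ and $b_1$, giving $-2\le \tilde b_1-b_1\le 0$.

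\textbf{The estimate on $b_3$.} By Poincar\'e--Lefschetz duality with $\Q$-coefficients, $H_3(X)\cong H^1(X,M)$. Since $M$ is connected, $H^0(X)\to H^0(M)$ is an isomorphism, so the cohomology sequence of the pair $(X,M)$ gives
\[
0\to H^1(X,M)\to H^1(X)\to H^1(M).
\]
Thus $b_3=b_1-r$, where $r$ is the rank of the restriction $H^1(X)\to H^1(M)$, and $0\le r\le b_1(M)=b_2(M)$. The same holds for $X_n$ with some $\tilde r\in[0,b_2(M)]$. Therefore
\[
\tilde b_3-b_3=(\tilde b_1-b_1)-(\tilde r-r).
\]
Combining this with the $b_1$ estimate gives $-b_2(M)-2\le \tilde b_3-b_3\le b_2(M)$.

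\textbf{The estimate on $b_2^0$.} I will identify the radical of the intersection form on $H_2(X;\Q)$ with the image of $H_2(M;\Q)\to H_2(X;\Q)$.
\begin{itemize}
\item The intersection pairing factors as $H_2(X)\to H_2(X,M)\cong H^2(X)\cong \mathrm{Hom}(H_2(X),\Q)$ over $\Q$.
\item Hence the radical equals the kernel of $H_2(X)\to H_2(X,M)$.
\item By the exact sequence of the pair, this kernel is the image of $H_2(M)$.
\end{itemize}
So $0\le b_2^0\le b_2(M)$, and likewise $0\le \tilde b_2^0\le b_2(M)$. This gives $|\tilde b_2^0-b_2^0|\le b_2(M)$.

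The only step needing care is the input that $E(n)^0$ is simply connected, or at least has $H_1(E(n)^0;\Q)=0$, which is what the Mayer--Vietoris computation for $b_1$ uses. I would quote it from Gompf's work on the symplectic sum and elliptic surfaces. Everything else is formal.
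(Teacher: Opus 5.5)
Your proof is correct and follows the paper's argument: Mayer--Vietoris with the simply connected piece $E(n)\setminus N(F)$ gives $H_1(X_n)\cong\coker\bigl(H_1(T)\to H_1(X)\bigr)$, the radical of the intersection form is identified with $\im\bigl(H_2(M)\to H_2(X)\bigr)$, and Lefschetz duality plus the exact sequence of the pair handles $b_3$. The differences are cosmetic (Mayer--Vietoris on $X=X^0\cup N(T)$ instead of the paper's excision diagram chase, $H^1(X,M)$ instead of $H_1(X,\partial X)$), and you actually justify the description of $b_2^0$ that the paper only asserts. One small correction to your aside on $\pi_1(E(n)^0)=1$: a cusp neighborhood alone does not kill the meridian of $F$ (there it has infinite order in $H_1$ of the complement); the meridian dies because the section of $E(n)$ meets $F$ once, or alternatively because $E(n)^0$ is a Lefschetz fibration over a disk, so its $\pi_1$ is a quotient of $\pi_1(F)$.
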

\begin{proof}
Firsrt we prove that $-2\leq \tilde{b}_1-b_1\leq 0$. 
Let $F$ and $N(F)$ be a regular fiber of $\pi \colon E(n)\to \C P^1$ and its tubular neighborhood, respectively. 
Also we denote a tubular neighborhood of the torus $T\subset X$ by $N(T)$. 
Then, $X_n$ can be topologically described as $$X_n=(X\setminus N(T))\cup (E(n)\setminus N(F)).$$ 
Since $E(n)\setminus N(F)$ is simply-connected, we obtain the following exact sequence as a part of the Mayer--Vietoris sequence: $$H_1(\partial N(T))\to H_1(X\setminus N(T))\to H_1(X_n)\to 0.$$
Hence, $b_1(X_n)$ coincides with the corank of $$i_{\ast }\colon H _1(\partial N(T))\to H_1(X\setminus N(T)), $$
where $i\colon \partial N(T) \to X\setminus N(T)$ is the inclusion map. 
Now we consider the following commutative diagrams of two homology exact sequences, 
where all the vertical homomorphisms are induced by the natural inclusion maps $\iota _k$ ($k=1,2,3$): 
\[
\begin{CD}
H_1(\partial N(T))@>{i_{\ast }}>> H_1(X\setminus N(T))@>{j_{\ast}}>> H_1(X\setminus N(T), \partial N(T)) \\
@V{(\iota_1)_{\ast}}VV  @V{(\iota_2)_{\ast}}VV @V{(\iota_3)_{\ast}}VV \\
H_1(N(T))@>{{i'}_{\ast}}>> H_1(X) @>{{j'}_{\ast}}>> H_1(X, N(T))
\end{CD}
\]
We want to show that the cokernel of $i_{\ast }$ is isomorphic to that of 
$${i'} _{\ast }\colon \Z^2=H_1(N(T))\to H_1(X),$$ where ${i'} \colon N(T)\to X$ is the inclusion. 
In order for that, it is enough to prove that the image of $j_{\ast }$ is isomorphic to that of ${j'}_{\ast }$. 
Here we notice that the homomorphism $$(\iota _2)_{\ast } \colon H_1(X\setminus N(T))\to H_1(X)$$ is surjective, 
since we have $$H_1(X, X\setminus N(T))\cong H_1(N(T), \partial N(T))\cong H_1(T^2\times D^2, T^3)= 0. $$
In addition, the homomorphism $$(\iota _3)_{\ast } \colon H_1(X\setminus N(T), \partial N(T))\to H_1(X, N(T))$$ 
is an excision isomorphism. 
Hence, we see that $$\im (j_{\ast })\cong \im ((\iota_3)_{\ast}\circ {j_{\ast}})= \im ({j'}_{\ast}\circ (\iota _2)_{\ast})=\im ({j'}_{\ast}).$$ 
Thus we have shown that $\coker ({i} _{\ast })\cong \coker ({i'} _{\ast })$. 
Therefore, we obtain $b_1(X)-2\leq b_1(X_n)=\corank ({i'} _{\ast }) \leq b_1(X)$, and hence, $-2\leq \tilde{b}_1-b_1\leq 0$. 
\\

\noindent
Next we show that $-b_2(M) \leq \tilde{b}_2^0-b_2^0 \leq b_2(M)$. 
Let $j\colon \partial X\to X$ be the inclusion. 
Since $b_2^0=\mathrm{rank} (j_{\ast }(H_2(\partial X;\Q)))$ and $\partial X=M$, we have $0\leq b_2^0 \leq b_2(M)$. 
Smilarly, $0\leq \tilde{b}_2^0 \leq b_2(M)$. Then it follows that $$-b_2(M) \leq \tilde{b}_2^0-b_2^0 \leq b_2(M).$$

\noindent
Finally, we show that $-b_2(M)-2\leq \tilde{b}_3-b_3\leq b_2(M)$. 
By the Poincar\'{e} duality, we have that $b_3=\rank (H_1(X, \partial X))$ and $\tilde{b}_3=\rank (H_1(X_n, \partial X_n))$.  
By the homology exact sequence for the pair $(X, \partial X)$, we obtain the following exact sequence: 
\begin{eqnarray*}
H_1(\partial X)\to H_1(X)\to H_1(X, \partial X)\to 0, 
\end{eqnarray*}
where $j\colon \partial X\to X$ is the inclusion. 
Hence,  
\begin{eqnarray*}
b_1-b_1(\partial X)\leq \rank (H_1(X, \partial X))=\corank (j_{\ast }) \leq b_1. 
\end{eqnarray*}
Similarly, we have $$\tilde{b}_1-b_1(\partial X_n)\leq \rank (H_1(X_n, \partial X_n)) \leq \tilde{b}_1.$$
Recalling that $\partial X=\partial X_n=M$ and $b_1(M)=b_2(M)$, we obtain 
$$b_1-b_2(M)\leq b_3\leq b_1, \;\; \tilde{b}_1-b_2(M) \leq \tilde{b}_3\leq \tilde{b}_1.$$
Therefore, 
$$\tilde{b}_1-b_1-b_2(M)\leq \tilde{b}_3-b_3\leq \tilde{b}_1-b_1+b_2(M).$$
Since $-2\leq \tilde{b}_1-b_1\leq 0$, it follows that $-b_2(M)-2\leq \tilde{b}_3-b_3\leq b_2(M)$. 
\end{proof}

\begin{lemma}\label{estimate 2}
$$-b_2(M)-2\leq \tilde{b}_2^+-b_2^+-2n\leq b_2(M).$$ 
\end{lemma}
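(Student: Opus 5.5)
The estimate will come from combining the Euler characteristic and the signature of $X_n$, each computed from the symplectic sum decomposition, with the bounds of Lemma~\ref{estimate}.

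We use the decomposition $X_n=(X\setminus N(T))\cup(E(n)\setminus N(F))$, glued along $T^3$. Since $\chi(T^3)=\chi(T^2)=0$, additivity of the Euler characteristic gives
$$\chi(X_n)=\chi(X)+\chi(E(n))=\chi(X)+12n.$$
For the signature we use Novikov additivity along the closed $3$-manifold $\partial N(T)\cong T^3$. The tubular neighborhoods $N(T)\cong T^2\times D^2$ and $N(F)$ have signature zero, so
$$\sigma(X)=\sigma(X\setminus N(T)), \qquad \sigma(E(n))=\sigma(E(n)\setminus N(F)),$$
and therefore
$$\sigma(X_n)=\sigma(X)+\sigma(E(n))=\sigma(X)-8n.$$
Here $\sigma$ is the signature of the possibly degenerate intersection form on $H_2(\,\cdot\,;\Q)$, so $\sigma=b_2^+-b_2^-$. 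This is exactly the setting in which Novikov additivity applies to manifolds with boundary, since the gluing region $T^3$ is disjoint from $\partial X=M$.

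Next we solve for $b_2^+$. We have $b_2=b_2^++b_2^-+b_2^0$ and $\chi=1-b_1+b_2-b_3$, with $b_4=0$ because $X$ has nonempty boundary and is connected. Hence
$$2b_2^+=b_2+\sigma-b_2^0=\chi-1+b_1+b_3+\sigma-b_2^0.$$
Taking the difference between $X_n$ and $X$ gives
$$2(\tilde b_2^+-b_2^+)=12n-8n+(\tilde b_1-b_1)+(\tilde b_3-b_3)-(\tilde b_2^0-b_2^0),$$
that is,
$$\tilde b_2^+-b_2^+-2n=\tfrac12\big[(\tilde b_1-b_1)+(\tilde b_3-b_3)-(\tilde b_2^0-b_2^0)\big].$$

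Finally we insert the bounds of Lemma~\ref{estimate}. The bracket lies between $-2+(-b_2(M)-2)-b_2(M)=-2b_2(M)-4$ and $0+b_2(M)+b_2(M)=2b_2(M)$. Dividing by two gives $-b_2(M)-2\le\tilde b_2^+-b_2^+-2n\le b_2(M)$, which is the claim.

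The main obstacle is justifying Novikov additivity with degenerate forms and nonempty boundary. I would handle this by noting that the radical of the intersection form of $X$ comes from $H_2(\partial X)$, which is untouched by the surgery, so the standard argument still goes through.
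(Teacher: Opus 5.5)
Your proposal is correct and follows the paper's proof essentially step for step. You use additivity of $\chi$ and of $\sigma$ over the symplectic sum; the paper cites Wall's formula, which here reduces to Novikov additivity because the gluing region $T^3$ is closed. From these you derive the same identity $\tilde b_2^+-b_2^+-2n=\tfrac12\big[(\tilde b_1-b_1)+(\tilde b_3-b_3)-(\tilde b_2^0-b_2^0)\big]$ and then bound it with Lemma~\ref{estimate}, exactly as the paper does.
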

\begin{proof}
Wall's signature formula shows that 
$$\sigma ({X}_n)=\sigma (X)+\sigma (E(n)).$$
Since ${X}_n$ is the symplectic sum of $X$ and $E(n)$ along embedded tori, 
it follows that $$\chi (X_n)=\chi (X)+\chi (E(n)).$$ 
Recalling that $\sigma (E(n))=-8n$ and $\chi (E(n))=12n$, 
we have 
\begin{eqnarray*}
\tilde{b}_2^+-\tilde{b}_2^-&=&b_2^+-b_2^--8n, \\
1-\tilde{b}_1+\tilde{b}_2-\tilde{b}_3&=&1-b_1+b_2-b_3+12n.
\end{eqnarray*}
Adding the corresponding sides and simplifying, we obtain 
$$\tilde{b}_2^+-b_2^+-2n=\frac{1}{2}(b_2^0-\tilde{b}_2^0+\tilde{b}_3-b_3+\tilde{b}_1-b_1),$$
where we use $b_2=b_2^0+b_2^++b_2^-$ and $\tilde{b}_2=\tilde{b}_2^0+\tilde{b}_2^++\tilde{b}_2^-$.
Therefore, by Lemma~\ref{estimate}, 
$$-b_2(M)-2\leq \tilde{b}_2^+-b_2^+-2n\leq b_2(M).$$
\end{proof}
 
As a corollary to Theorem~\ref{caps}, we obtain the following assertion, which is our main theorem. 

\begin{theorem}\label{general}
If the link of an isolated complex surface singularity is either a $Sol^3$-manifold or an $\widetilde{SL}(2;\R)$-manifold with its canonical contact structure, then it admits infinitely many strong symplectic fillings that are pairwise non-diffeomorphic 
and not related by a sequence of blow-ups or blow-downs. 
\end{theorem}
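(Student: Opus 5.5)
The plan is to glue the Geiges--Mitsumatsu Liouville domain of Theorem~\ref{GM} to the caps of Theorem~\ref{caps}, and then to distinguish the resulting fillings by $b_2^+$.

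\textbf{Setting up the link.} Let $(M,\xi)$ be the link with its canonical contact structure. By Section~\ref{singularity links}, $M=\Gamma\backslash G$ with $G=Sol^3$ or $\widetilde{SL}(2;\R)$, and $\xi=\ker\alpha_1$, where $\alpha_1$ is induced by $e_1^{\ast}$. Theorem~\ref{GM} gives a Liouville domain $W=[-1,1]\times M$ with Liouville form $\alpha=\frac12((1+t)\alpha_1+(1-t)\alpha_2)$. On $\{1\}\times M$ this restricts to $\alpha_1$, so that component is $(M,\xi)$.

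\textbf{Identifying the other boundary component.} On $\{-1\}\times M$ the form restricts to $\alpha_2$, a negative contact form on $M$. As an oriented boundary component of $W$, it is $-M$ carrying the positive contact structure $\xi'=\ker\alpha_2$. Apply Theorem~\ref{caps} to the closed positive contact manifold $(-M,\xi')$. This gives caps $X_n$ with $\partial X_n=M$ and pairwise distinct $b_2^+(X_n)$. More precisely, Lemma~\ref{estimate 2} gives $b_2^+(X_n)=2n+O(1)$, so after passing to a subsequence the values are strictly increasing.

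\textbf{Gluing.} Glue $X_n$ to $W$ along $\{-1\}\times M$. Convex boundary glued to concave boundary along a contactomorphism yields a symplectic manifold, after the usual rescaling of one side by a constant so that the Liouville collars match. The result $F_n=W\cup X_n$ is compact and connected, with convex boundary $(M,\xi)$, so it is a strong symplectic filling.

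\textbf{Distinguishing the fillings by $b_2^+$.} Write $F_n=([-1,1]\times M)\cup_M X_n$. Since $W\simeq M$, $F_n$ deformation retracts onto $X_n$, and the intersection form of $F_n$ agrees with that of $X_n$. Hence $b_2^+(F_n)=b_2^+(X_n)$. These values are pairwise distinct, so the $F_n$ are pairwise non-diffeomorphic. Blow-ups and blow-downs change only $b_2^-$, never $b_2^+$, so no two $F_n$ are related by a sequence of them. If one also wants minimality, one blows down all exceptional spheres; this again preserves $b_2^+$, and the blow-down terminates because $b_2^-$ is finite.

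\textbf{Main obstacle.} I expect the main check to be the orientation and contact-sign bookkeeping at the concave gluing. One must confirm that $\alpha_2$ is a negative contact form on $M$, so that it is positive on the boundary component $-M$; that $\ker\alpha_2$ matches the co-orientation required for the concave end of the cap; and that $X$ in Theorem~\ref{caps} is defined with $\partial X=-(\text{contact manifold})$, here $-(-M)=M$. All of this follows from the sign conventions in Theorem~\ref{GM} and Table~\ref{Lie contact}.
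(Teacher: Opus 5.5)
Your proposal is correct and follows the paper's proof. You glue the Geiges--Mitsumatsu domain $W$ along its component $(-M,\ker\alpha_2)$ to the caps $X_n$ of Theorem~\ref{caps}, note that $W\cup X_n$ is diffeomorphic to $X_n$ so $b_2^+$ is unchanged, and use that blow-ups and blow-downs preserve $b_2^+$. Your orientation check at $t=-1$, the subsequence remark, and the collar-matching step (strictly, rescaling the cap and inserting a piece of symplectization) spell out details the paper leaves implicit.
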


\begin{proof}
Let $M$ be either a $Sol^3$-manifold or an $\widetilde{SL}(2;\R)$-manifold, 
and $(\xi_+, \xi _-)$ the bi-contact structure associated with its canonical Anosov flow. 
The convex boundary of the Liouville domain $W$ obtained in Section~\ref{Liouville domain} 
consists of the two connected components $\partial _+W$ and $\partial _-W$ 
which are contactomorphic to $(M, \xi _+)$ and $(-M, \xi_-)$, respectively. 
Gluing any symplectic cap $V$ of $(-M, \xi _-)$ to $W$ by a contactomorphism $\phi \colon (-M, \xi _-)\to \partial _-W$, 
we obtain a strong symplectic filling $\tilde{V}=W\cup _{\phi }V$ of $(M, \xi _+)$ which is diffeomorphic to $V$. 

Using $X_n$ in the proof of Theorem~\ref{caps} as $V$, we see that 
$\tilde{X}_n=W\cup _{\phi }X_n$ is a strong symplectic filling of $(M, \xi _+)$ with $b_2^+(\tilde{X}_n)=b_2^+(X_n)$. 
By Theorem~\ref{caps}, the family $\{ \tilde{X}_n \}_{n>0}$ contains infinite members whose $b_2^+$-invariants are pairwise distinct. This produces infinitely many strong symplectic fillings of $(M, \xi _+)$ 
that are pairwise non-diffeomorphic 
and not related by a sequence of blow-ups or blow-downs. 
\end{proof}

As a consequence, the link of any cusp singularity, exceptional unimodal singularity, or 
hyperbolic Brieskorn singularity admits infinitely many pairwise non-diffeomorphic minimal strong symplectic fillings. 
Recall that the link of a Gorenstein quasihomogeneous singularity is an $\widetilde{SL}(2;\R)$-manifold 
if and only if the singularity is neither simple nor simple elliptic. 
Combining these with the results of Ohta--Ono \cite{Ohta-Ono03, Ohta-Ono_simple}, we obtain the following corollaries. 

\begin{cor}
Among simple and unimodal singularities, 
cusp singularities and the $14$ exceptional unimodal singularities are the only ones 
whose links admit infinitely many pairwise non-diffeomorphic 
minimal strong symplectic fillings. 
\end{cor}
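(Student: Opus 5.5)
The corollary has two directions. First, the cusp and exceptional links admit infinitely many minimal fillings. Second, the simple, simple elliptic and non-hyperbolic cases do not.

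\textbf{Existence.} By Section~\ref{singularity links}, the link of a cusp singularity ($T_{pqr}$) with its canonical contact structure is $(T_A,\xi_+)$, a $Sol^3$-manifold. The link of each of the $14$ exceptional unimodal singularities is an $\widetilde{SL}(2;\R)$-manifold with the contact structure induced by $e_1^\ast$. For $a\neq 0$ this holds after identifying the link with that of the corresponding triangle singularity via deformation. So Theorem~\ref{general} applies and yields the fillings $\tilde X_n=W\cup_\phi X_n$ with pairwise distinct $b_2^+$.

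It remains to upgrade these to infinitely many pairwise non-diffeomorphic \emph{minimal} fillings. I would blow down each $\tilde X_n$ maximally along symplectic $(-1)$-spheres. This is possible because each blow-down lowers $b_2$ and preserves the boundary and the strong convexity. The result is a minimal strong filling $\tilde X_n^{\min}$. Blowing down a $(-1)$-sphere only reduces $b_2^-$, so $b_2^+(\tilde X_n^{\min})=b_2^+(\tilde X_n)$. Theorem~\ref{caps}, via Lemma~\ref{estimate 2}, gives infinitely many distinct values of $b_2^+$ among these. Since $b_2^+$ is a diffeomorphism invariant, infinitely many of the $\tilde X_n^{\min}$ are pairwise non-diffeomorphic.

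\textbf{Non-existence.} For a simple singularity, Ohta--Ono~\cite{Ohta-Ono_simple} show every minimal strong filling of the link is diffeomorphic to the Milnor fiber, so there is exactly one diffeomorphism type. For the simple elliptic singularities $\tilde E_6,\tilde E_7,\tilde E_8$, \cite{Ohta-Ono03} shows every minimal strong filling is diffeomorphic to either the Milnor fiber or the minimal resolution, so there are at most two types. By the classification in the introduction, the unimodal singularities are exactly the simple elliptic ones, the cusps $T_{pqr}$, and the $14$ exceptional ones. Hence the cusps and the $14$ exceptional singularities are the only candidates.

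\textbf{Main point to check.} The only step needing care is minimality. A symplectic cap $X_n$ might contain exceptional spheres, so I must confirm that maximal blow-down terminates and does not change $b_2^+$. Termination follows because each blow-down reduces $b_2$ by one. Invariance of $b_2^+$ follows because the exceptional class spans a negative-definite summand of the intersection form. With these two facts the argument is a direct combination of Theorem~\ref{general} with the Ohta--Ono classifications.
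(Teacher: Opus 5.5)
Your proposal is correct and follows the paper's route. Existence comes from Theorem~\ref{general}, with cusp links being $Sol^3$-manifolds and exceptional unimodal links being $\widetilde{SL}(2;\R)$-manifolds, and the simple and simple elliptic cases are excluded by Ohta--Ono. Your explicit maximal blow-down step, using that blowing down a $(-1)$-sphere leaves $b_2^+$ unchanged, correctly supplies the passage from Theorem~\ref{general} to \emph{minimal} fillings that the paper leaves implicit.
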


\begin{cor}
The link of a Gorenstein quasihomogeneous surface singularity admits infinitely many pairwise non-diffeomorphic 
minimal strong symplectic fillings if and only if the singularity is neither simple nor simple elliptic. 
In particular, the $3$-dimensional Brieskorn manifold $M(p,q,r)$ admits infinitely many pairwise non-diffeomorphic 
minimal strong symplectic fillings if and only if $\frac{1}{p}+\frac{1}{q}+\frac{1}{r}<1$. 
\end{cor}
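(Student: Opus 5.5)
The statement to prove is the last corollary: for a Gorenstein quasihomogeneous surface singularity, the link has infinitely many pairwise non-diffeomorphic minimal strong fillings exactly when the singularity is neither simple nor simple elliptic, together with the Brieskorn specialization. The plan is to prove the two directions separately and then specialize.

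For the ``if'' direction, suppose the singularity is Gorenstein quasihomogeneous and neither simple nor simple elliptic. By the characterization recalled in Section~\ref{singularity links}, its link $M$ is then an $\widetilde{SL}(2;\R)$-manifold $\Gamma\backslash\widetilde{SL}(2;\R)$. Its canonical contact structure is the one induced by $e_1^{\ast}$, which is the positive half $\xi_+$ of the bi-contact structure $(e_1^{\ast},e_2^{\ast})$.

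Theorem~\ref{general} then produces fillings $\tilde X_n=W\cup_\phi X_n$ whose values of $b_2^+$ are pairwise distinct for infinitely many $n$.

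\begin{itemize}
\item Distinct $b_2^+$ gives pairwise non-diffeomorphic fillings.
\item These fillings need not be minimal. For each of them we blow down all symplectic $(-1)$-spheres; this terminates because $b_2$ drops each time, and the result is a minimal strong filling with the same boundary.
\item Blowing down changes only $b_2^-$, since a $(-1)$-sphere spans a negative-definite summand. Hence $b_2^+$ is preserved, the minimal models still realize infinitely many distinct values of $b_2^+$, and so they are pairwise non-diffeomorphic.
\end{itemize}

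This step is the only place needing care. The blow-down must be performed in the interior, which requires that the exceptional spheres can be isotoped off a collar of the boundary. That is standard for strong fillings: one completes $\tilde X_n$ with a cylindrical end and uses McDuff's minimality arguments.

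For the ``only if'' direction, suppose the singularity is simple or simple elliptic.

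\begin{itemize}
\item In the simple case, Ohta--Ono~\cite{Ohta-Ono_simple} show that every minimal strong filling is diffeomorphic to the Milnor fiber.
\item In the simple elliptic case, Ohta--Ono~\cite{Ohta-Ono03} show that every minimal strong filling is diffeomorphic to either the Milnor fiber or the minimal resolution.
\end{itemize}

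The first result is stated for the simple singularities. Since a Gorenstein quasihomogeneous singularity with spherical link is exactly a simple singularity, it applies here.

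The second result needs more care. Ohta--Ono treat simple elliptic singularities in general, and these include all degrees, not only the hypersurface cases $\tilde E_6$, $\tilde E_7$, $\tilde E_8$. For degree above $9$ the Milnor fiber does not exist, since the singularity is not smoothable. I would check that their statement covers all degrees, with the minimal resolution as the only minimal filling when no smoothing exists. In either case there are only finitely many minimal fillings up to diffeomorphism.

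Finally, for Brieskorn manifolds the Brieskorn singularity $x^p+y^q+z^r=0$ is Gorenstein (a hypersurface) and quasihomogeneous. By Milnor's trichotomy, it is neither simple nor simple elliptic exactly when $\frac1p+\frac1q+\frac1r<1$. Applying the main equivalence gives the specialization.
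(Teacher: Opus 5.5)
Your proposal is correct and follows essentially the same route as the paper: the ``if'' direction is Theorem~\ref{general} applied to the $\widetilde{SL}(2;\R)$-link with its canonical structure $\ker e_1^{\ast}$, the ``only if'' direction comes from the two Ohta--Ono classification results, and the Brieskorn case comes from Milnor's trichotomy. Your explicit blow-down step (the minimal models keep $b_2^+$, so they stay pairwise non-diffeomorphic) and your check that Ohta--Ono covers simple elliptic singularities of every degree spell out points the paper leaves implicit.
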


\subsection*{Acknowledgements}
This work was supported by the Japan Society for the Promotion of Science (JSPS) KAKENHI Grant Numbers JP21H00985, JP22K13913, JP23H05437, JP24H00182, JP25K06968 and JP26K06780.


\begin{thebibliography}{10}

\bibitem{Arnold-Gusein-Zade-Varchenko}
V.~I. Arnol'd, S.~M. Guse\u{i}n-Zade, and A.~N. Varchenko, \emph{Singularities
  of differentiable maps. {V}ol. {I}}, Monographs in Mathematics, vol.~82,
  Birkh\"auser Boston, Inc., Boston, MA, 1985, The classification of critical
  points, caustics and wave fronts, Translated from the Russian by Ian Porteous
  and Mark Reynolds. \MR{777682}

\bibitem{Baykur-Nemethi-Plamenevskaya}
R.~\.Inan\c~c Baykur, Andr\'as N\'emethi, and Olga Plamenevskaya, \emph{Stein
  fillings vs. milnor fibers}, arXiv preprint arXiv:2403.12216 (2024).

\bibitem{Ding-Geiges}
Fan Ding and Hansj\"org Geiges, \emph{A {L}egendrian surgery presentation of
  contact 3-manifolds}, Math. Proc. Cambridge Philos. Soc. \textbf{136} (2004),
  no.~3, 583--598. \MR{2055048}

\bibitem{Dolgachev2}
I.~V. Dolgachev, \emph{Automorphic forms, and quasihomogeneous singularities},
  Funkcional. Anal. i Prilo\v zen. \textbf{9} (1975), no.~2, 67--68.
  \MR{568895}

\bibitem{Dolgachev}
Igor~V. Dolgachev, \emph{On the link space of a {G}orenstein quasihomogeneous
  surface singularity}, Math. Ann. \textbf{265} (1983), no.~4, 529--540.
  \MR{721886}

\bibitem{Ehlers-Neumann-Scherk}
F.~Ehlers, W.~D. Neumann, and J.~Scherk, \emph{Links of surface singularities
  and {CR} space forms}, Comment. Math. Helv. \textbf{62} (1987), no.~2,
  240--264. \MR{896096}

\bibitem{Ekholm-Honda-Kalman}
Tobias Ekholm, Ko~Honda, and Tam\'{a}s K\'{a}lm\'{a}n, \emph{Legendrian knots
  and exact {L}agrangian cobordisms}, J. Eur. Math. Soc. \textbf{18} (2016),
  no.~11, 2627--2689. \MR{3562353}

\bibitem{Etnyre-Honda}
John~B. Etnyre and Ko~Honda, \emph{On symplectic cobordisms}, Math. Ann.
  \textbf{323} (2002), no.~1, 31--39. \MR{1906906}

\bibitem{Gay}
David~T. Gay, \emph{Explicit concave fillings of contact three-manifolds},
  Math. Proc. Cambridge Philos. Soc. \textbf{133} (2002), no.~3, 431--441.
  \MR{1919715}

\bibitem{Geiges}
Hansj\"org Geiges, \emph{Examples of symplectic {$4$}-manifolds with
  disconnected boundary of contact type}, Bull. London Math. Soc. \textbf{27}
  (1995), no.~3, 278--280. \MR{1328705}

\bibitem{Geiges-Zehmisch}
Hansj{\"o}rg Geiges and Kai Zehmisch, \emph{Symplectic fillings of unit
  cotangent bundles of hyperbolic surfaces}, Israel J. Math. (2025), 1--14.

\bibitem{Gompf}
Robert~E. Gompf, \emph{A new construction of symplectic manifolds}, Ann. of
  Math. (2) \textbf{142} (1995), no.~3, 527--595. \MR{1356781}

\bibitem{Hirzebruch}
Friedrich E.~P. Hirzebruch, \emph{Hilbert modular surfaces}, Enseign. Math. (2)
  \textbf{19} (1973), 183--281. \MR{393045}

\bibitem{Karras}
Ulrich Karras, \emph{Deformations of cusp singularities}, Several complex
  variables ({P}roc. {S}ympos. {P}ure {M}ath., {V}ol. {XXX}, {P}art 1,
  {W}illiams {C}oll., {W}illiamstown, {M}ass., 1975), Proc. Sympos. Pure Math.,
  vol. Vol. XXX, Part 1, Amer. Math. Soc., Providence, RI, 1977, pp.~37--44.
  \MR{472811}

\bibitem{Kasuya}
Naohiko Kasuya, \emph{The canonical contact structure on the link of a cusp
  singularity}, Tokyo J. Math. \textbf{37} (2014), no.~1, 1--20. \MR{3238328}

\bibitem{Laufer}
Henry~B. Laufer, \emph{Taut two-dimensional singularities}, Math. Ann.
  \textbf{205} (1973), 131--164. \MR{333238}

\bibitem{Laufer2}
\bysame, \emph{On minimally elliptic singularities}, Amer. J. Math. \textbf{99}
  (1977), no.~6, 1257--1295. \MR{568898}

\bibitem{Lisca-Matic}
P.~Lisca and G.~Mati\'c, \emph{Tight contact structures and {S}eiberg-{W}itten
  invariants}, Invent. Math. \textbf{129} (1997), no.~3, 509--525. \MR{1465333}

\bibitem{McDuff}
Dusa McDuff, \emph{Symplectic manifolds with contact type boundaries}, Invent.
  Math. \textbf{103} (1991), no.~3, 651--671. \MR{1091622}

\bibitem{Milnor}
John Milnor, \emph{On the {$3$}-dimensional {B}rieskorn manifolds
  {$M(p,q,r)$}}, Knots, groups, and {$3$}-manifolds ({P}apers dedicated to the
  memory of {R}. {H}. {F}ox), Ann. of Math. Stud., vol. No. 84, Princeton Univ.
  Press, Princeton, NJ, 1975, pp.~175--225. \MR{418127}

\bibitem{Mitsumatsu}
Yoshihiko Mitsumatsu, \emph{Anosov flows and non-{S}tein symplectic manifolds},
  S\'eminaire {G}aston {D}arboux de {G}\'eom\'etrie et {T}opologie
  {D}iff\'erentielle, 1994--1995 ({M}ontpellier), Univ. Montpellier II,
  Montpellier, 1995, pp.~iii, 31--42. \MR{1343776}

\bibitem{Neumann2}
Walter~D. Neumann, \emph{A calculus for plumbing applied to the topology of
  complex surface singularities and degenerating complex curves}, Trans. Amer.
  Math. Soc. \textbf{268} (1981), no.~2, 299--344. \MR{632532}

\bibitem{Neumann}
\bysame, \emph{Geometry of quasihomogeneous surface singularities},
  Singularities, {P}art 2 ({A}rcata, {C}alif., 1981), Proc. Sympos. Pure Math.,
  vol.~40, Amer. Math. Soc., Providence, RI, 1983, pp.~245--258. \MR{713253}

\bibitem{Ohta-Ono03}
Hiroshi Ohta and Kaoru Ono, \emph{Symplectic fillings of the link of simple
  elliptic singularities}, J. Reine Angew. Math. \textbf{565} (2003), 183--205.
  \MR{2024651}

\bibitem{Ohta-Ono_simple}
\bysame, \emph{Simple singularities and symplectic fillings}, J. Differential
  Geom. \textbf{69} (2005), no.~1, 1--42. \MR{2169581}

\bibitem{Ohta-Ono}
\bysame, \emph{Examples of isolated surface singularities whose links have
  infinitely many symplectic fillings}, J. Fixed Point Theory Appl. \textbf{3}
  (2008), no.~1, 51--56. \MR{2402907}

\bibitem{Raymond-Vasquez}
Frank Raymond and Alphonse~T. Vasquez, \emph{{$3$}-manifolds whose universal
  coverings are {L}ie groups}, Topology Appl. \textbf{12} (1981), no.~2,
  161--179. \MR{612013}

\bibitem{Saito}
Kyoji Saito, \emph{Einfach-elliptische {S}ingularit\"aten}, Invent. Math.
  \textbf{23} (1974), 289--325. \MR{354669}

\bibitem{Seade}
Jos\'e Seade, \emph{On the topology of isolated singularities in analytic
  spaces}, Progress in Mathematics, vol. 241, Birkh\"auser Verlag, Basel, 2006.
  \MR{2186327}

\end{thebibliography}

\end{document}